
\documentclass[12pt]{amsart}
\usepackage{fullpage,url,amssymb,citesort}


\newcommand{\F}{{\mathbb F}}

\newcommand{\Z}{{\mathbb Z}}

\newcommand{\eps}{\varepsilon}

\newcommand{\frakp}{\mathfrak{p}}





\newtheorem{theorem}{Theorem}[section]
\newtheorem{lemma}[theorem]{Lemma}

\theoremstyle{definition}

\newtheorem{question}[theorem]{Question}

\theoremstyle{remark}
\newtheorem{remark}[theorem]{Remark}

\begin{document}

\title{Everywhere ramified towers of global function fields}
\subjclass[2000]{Primary 11G20; Secondary 14G05, 14G15}
\author{Iwan Duursma}
\address{Department of Mathematics, University of Illinois,
	Urbana, IL 61801--2975, USA}
\email{duursma@math.uiuc.edu}
\urladdr{http://www.math.uiuc.edu/\~{}duursma}

\author{Bjorn Poonen}
\thanks{The second author was supported in part by NSF grant DMS-0301280
          and a Packard Fellowship.}
\address{Department of Mathematics, University of California,
	Berkeley, CA 94720--3840, USA}
\email{poonen@math.berkeley.edu}
\urladdr{http://math.berkeley.edu/\~{}poonen}

\author{Michael Zieve}
\address{
Center for Communications Research,
805 Bunn Drive,
Princeton, NJ, 08540,
USA
}
\email{zieve@idaccr.org}

\date{July 16, 2003}

\begin{abstract}
We construct a tower of function fields $F_0\subset F_1\subset\dots$ over
a finite field such that every place of every $F_i$ ramifies in the tower
and $\lim \text{genus}(F_i)/[F_i:F_0]<\infty$.
We also construct a tower in which every place ramifies and
$\lim N_{F_i}/[F_i:F_0]>0$, where $N_{F_i}$ is the number of degree-$1$
places of $F_i$.
These towers answer questions posed by Stichtenoth at Fq7.
\end{abstract}

\maketitle

\section{Introduction}\label{S:introduction}

Let $q$ be a prime power, and let $\F_q$ be a finite field of size $q$.
By a \emph{function field over $\F_q$},
we mean a finitely generated extension $K/\F_q$
of transcendence degree~1 in which $\F_q$ is algebraically closed.
By an \emph{extension of function fields $K'/K$},
we mean a finite separable extension such that $K$ and $K'$
are function fields over the same $\F_q$.
Let $g_K$ be the genus of $K$.
Let $N_K$ be the number of degree-$1$ places of $K$
(the number of $\F_q$-rational points on the corresponding curve).
A \emph{tower} of function fields over $\F_q$ is a sequence
of extensions of such function fields
\begin{equation*}
K_0 \subset K_1 \subset K_2 \subset \dots
\end{equation*}
such that $g_i:=g_{K_i}\to\infty$ as $i\to\infty$.
Define $N_i=N_{K_i}$, and $d_i = [K_i:K_0]$.
Since $N_i/d_i$ is decreasing while $(g_i-1)/d_i$ is increasing (Hurwitz),
$\lim N_i/d_i$  and $\lim g_i/d_i$  exist.
(The latter can be $\infty$.)

The Weil bound $N_K\le q+1+2g_K\sqrt{q}$ implies
\begin{equation*}
\lim N_i/g_i \le 2\sqrt{q}.
\end{equation*}
This was improved by Drinfeld and Vladut \cite{DV} (following Ihara \cite{Ih})
to
\begin{equation*}
\lim N_i/g_i \le \sqrt{q}-1.
\end{equation*}
Ihara also showed that, for any square $q$, there are towers of Shimura curves
with $\lim N_i/g_i=\sqrt{q}-1$ \cite{Ih-66,Ih-69,Ih-75,Ih-79,Ih}.
Subsequent authors have given further constructions
of `asymptotically good' towers,
i.e., towers with
$\lim N_i/g_i>0$ \cite{AM,Be,BG,E,E-Drinfeld,FKV,FPS,GS1,GS2,GS4,%
GST,GV,HM,LM,LMS,MV,NX1,NX2,NX3,NX4,Pe,Sc,Se,Se-Harvard,St,Te,Wu,Xi,Zi}.

Every known asymptotically good tower has two special properties:
there is some place of some $K_i$ which splits completely in the tower,
and there are only finitely many places of $K_0$ which ramify in the tower.
(We say that a place of $K_i$ \emph{splits completely in the tower}
if it splits completely in $K_j/K_i$ for every $j\ge i$.
We say that a place of $K_0$ \emph{ramifies in the tower}
if there exists $i$ such that it ramifies in $K_i/K_0$.)
But it is difficult to study asymptotically good towers directly
since one must control both the genus and the number of rational places.
With this as motivation,
Stichtenoth posed the following two questions in his talk at Fq7
(the Seventh International Conference on Finite Fields and Their Applications):

\begin{question}\label{Q:splits completely}
If $\lim N_i/d_i > 0$, must some $K_i$
have a place that splits completely in the tower?
\end{question}

\begin{question}\label{Q:finite ramification}
If $\lim g_i/d_i <\infty$,
must only finitely many places of $K_0$ ramify in the tower?
\end{question}

Our Theorems \ref{T:ramified tower1} and~\ref{T:ramified tower2}
imply negative answers to these two questions.
Call a tower $K_0 \subset K_1 \subset \dots$
of function fields over $\F_q$
\emph{everywhere ramified}
if for each place $P$ of each $K_i$,
there exists $j>i$ such that $P$ ramifies in $K_j/K_i$.

\begin{theorem}
\label{T:ramified tower1}
Given a function field $K_0$ over $\F_q$ with a rational place,
there exists an everywhere ramified tower
$K_0 \subset K_1 \subset \dots$
such that $\lim N_i/d_i > 0$.
\end{theorem}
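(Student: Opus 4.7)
The plan is to construct the tower $(K_i)$ inductively, taking each step $K_{i+1}/K_i$ to be a degree-$p$ Artin--Schreier extension $K_i(y_{i+1})$, where $y_{i+1}^p - y_{i+1} = h_{i+1} \in K_i$ and $p = \Char \F_q$. At step $i+1$, via a diagonal argument, I designate a place $Q_{i+1}$ of $K_i$ to ramify, fix an auxiliary non-rational place $R_i$ of $K_i$ of degree $\geq 2$, and construct $h_{i+1}$ to have a simple pole at $Q_{i+1}$, a pole of order coprime to $p$ at $R_i$, and a zero at every rational place of $K_i$ other than $Q_{i+1}$. Letting $E$ denote the sum of those rational places, such $h_{i+1}$ is furnished by any element of $L(n_i R_i + Q_{i+1} - E)\setminus L(n_i R_i - E)$, which is nonempty by Riemann--Roch once $n_i \deg R_i$ exceeds $N_i + 2 g_i$.

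By Artin--Schreier theory, $K_{i+1}/K_i$ is totally ramified at $Q_{i+1}$ (since $\ord_{Q_{i+1}}h_{i+1} = -1$ is coprime to $p$), and every rational place $P \neq Q_{i+1}$ of $K_i$ splits completely in $K_{i+1}/K_i$ (since $h_{i+1}(P) = 0$ lies in $\wp(\F_q) = \ker \tr_{\F_q/\F_p}$, where $\wp$ denotes the Artin--Schreier operator). To achieve the everywhere-ramified property, I enumerate the countable set of pairs $(j, P)$ with $P$ a place of some $K_j$; at step $i+1$, take $Q_{i+1}$ to be any place of $K_i$ lying above the place $P$ of the next unhandled pair. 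Ramification of $Q_{i+1}$ in $K_{i+1}/K_i$ then forces ramification of $P$ in $K_{i+1}/K_j$, so each pair is eventually handled.

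The rational-place count satisfies $N_{i+1} \geq p(N_i - 1) + 1 = p N_i - (p-1)$, whence
\[
\frac{N_{i+1}}{d_{i+1}} \;\geq\; \frac{N_i}{d_i} - \frac{p-1}{d_{i+1}}.
\]
Telescoping the geometric series yields $\lim N_i/d_i \geq N_0 - 1$, which is positive provided $N_0 \geq 2$. If $N_0 = 1$, I begin with a preliminary extension $K_1/K_0$ constructed so that $P_0$ splits completely (with some non-rational place ramifying to keep the extension nontrivial), producing $K_1$ with $N_1 = p \geq 2$; the diagonal construction then proceeds from $K_1$.

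The main technical obstacle is the Riemann--Roch estimate at each level: the divisor $n_i R_i + Q_{i+1} - E$ must have degree exceeding $2 g_i - 1$ for the dimension gap $\dim L(n_i R_i + Q_{i+1} - E) - \dim L(n_i R_i - E) = \deg Q_{i+1} \geq 1$ to supply a suitable $h_{i+1}$, which forces $n_i \deg R_i$ to grow with both $N_i$ and $g_i$. Both quantities can grow geometrically in $i$, but since each $K_i$ contains non-rational places of arbitrarily large degree (by the Weil-type lower bound on place counts in a function field over $\F_q$), the requisite $n_i$ and $R_i$ can always be chosen.
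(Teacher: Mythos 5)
Your proposal is correct, and it follows essentially the same strategy as the paper: at each stage perform an Artin--Schreier extension $y^{(\cdot)} - y = h$ where $h$ has a simple pole at the place to be ramified and vanishes at every other rational place, then arrange everything via a diagonal enumeration. The paper's version uses $y^q - y = f$ (degree-$q$ steps) and, to control the drop in $N/d$, applies a preliminary ``boosting'' lemma (making all rational places split completely to enlarge $N$ until $1/N < \eps$), then takes a product $\prod(1-\eps_m) > 0$ to keep the limit positive. Your version instead uses degree-$p$ steps and telescopes the additive estimate $N_{i+1}/d_{i+1} \ge N_i/d_i - (p-1)/d_{i+1}$ directly, which works because $\sum 1/d_i < \infty$; this is a cleaner analytic bookkeeping, at the small cost of requiring a separate fix when $N_0 = 1$. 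One wrinkle worth flagging: you assert that $h_{i+1}$ has a pole ``of order coprime to $p$'' at $R_i$, but an element of $L(n_i R_i + Q_{i+1} - E)\setminus L(n_i R_i - E)$ need not have any pole at $R_i$ at all, let alone one of controlled order---the set difference only forces $\ord_{Q_{i+1}} h_{i+1} = -1$. Fortunately this condition on $R_i$ is not needed: the simple pole at $Q_{i+1}$ alone guarantees the extension has degree $p$, that $Q_{i+1}$ is totally ramified, and (since $f(Q'|Q_{i+1}) = 1$) that the constant field remains $\F_q$. The role of $R_i$ is purely to pad the degree of the divisor so Riemann--Roch applies, and for that no coprimality is required.
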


\begin{theorem}
\label{T:ramified tower2}
Given a function field $K_0$ over $\F_q$,
there exists an everywhere ramified tower
$K_0 \subset K_1 \subset \dots$
such that $\lim g_i/d_i <\infty$.
\end{theorem}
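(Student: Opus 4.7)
The plan is to build the tower inductively, with each $K_{i+1}$ a cyclic extension of $K_i$ of small degree --- an Artin--Schreier extension of degree $p = \Char \F_q$ will do --- chosen to ramify a specific ``target'' place selected by a diagonal enumeration over the countable set of pairs $(j, P)$ with $P$ a place of $K_j$. The enumeration is updated as the $K_j$ are built, and is arranged so that every pair is eventually serviced; this makes the tower everywhere ramified.

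At stage $i$, I would consult the next unhandled pair $(j_i, P_i)$ from the queue (with $j_i \le i$), pick a place $Q_i$ of $K_i$ lying above $P_i$, and choose $u_i \in K_i$ whose only pole is at $Q_i$, of some order $m_i$ coprime to $p$. Riemann--Roch guarantees the existence of such $u_i$ once $m_i$ is large enough (e.g.\ greater than $2g_i$). Setting $K_{i+1} = K_i(\alpha_i)$ with $\alpha_i^p - \alpha_i = u_i$ yields an extension totally ramified at $Q_i$ and unramified elsewhere, so $P_i$ ramifies in $K_{i+1}/K_{j_i}$. Riemann--Hurwitz then gives $2g_{i+1} - 2 = p(2g_i - 2) + (m_i + 1)(p-1)\deg Q_i$, and dividing by $d_{i+1}$ we see that $\lim g_i/d_i < \infty$ reduces to the convergence of $\sum_i (m_i + 1)\deg Q_i / d_{i+1}$.

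The hard part will be securing this summability. On the one hand, $m_i$ may be forced to be $\Theta(g_i)$, since Riemann--Roch only freely provides elements of $L(m_i Q_i)\setminus L((m_i-1)Q_i)$ once $m_i > 2g_i - 2$; on the other, $\deg Q_i$ may be as large as $[K_i:K_{j_i}]\cdot \deg P_i$ in the worst case of inertia in $K_i/K_{j_i}$. To circumvent both obstructions I would select $Q_i$ of minimum degree above $P_i$ (favoring split chains in $K_i/K_{j_i}$), schedule obligations so costly ones are deferred until $d_{i+1}$ has grown enough to dominate their contribution, and --- at the expensive stages --- replace the degree-$p$ step by an Artin--Schreier--Witt extension of degree $p^{n_i}$ with $n_i$ chosen large enough that $d_{i+1}$ outpaces the numerator. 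A careful combinatorial analysis of the resulting schedule should then yield $\lim g_i/d_i < \infty$ while still draining the queue of obligations.
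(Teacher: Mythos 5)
Your proposal correctly identifies the central obstruction but does not overcome it, and in fact it cannot be overcome within the Artin--Schreier framework you set up. Write $x_i = (2g_i-2)/d_i$. With a degree-$p$ Artin--Schreier step ramified only at $Q_i$, the different exponent at $Q_i$ is $(m_i+1)(p-1)$, and since $u_i$ must be nonconstant with pole supported only at $Q_i$, Riemann--Roch forces $m_i \deg Q_i \gtrsim g_i$ (indeed your choice $m_i > 2g_i$ gives this). Hence
\[
x_{i+1} \;=\; x_i + \frac{(m_i+1)(p-1)\deg Q_i}{p\,d_i}
       \;\ge\; x_i + \frac{(2g_i-2)(p-1)}{p\,d_i}
       \;=\; x_i\!\left(1 + \tfrac{p-1}{p}\right),
\]
once $g_i > 1$. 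This is geometric growth, so $x_i \to \infty$ regardless of how you schedule the obligations --- scheduling can defer a \emph{fixed} cost, but here the cost at stage $i$ is proportional to $x_i$ itself, so deferral only makes it worse. The Artin--Schreier--Witt escape does not help either: a totally ramified cyclic $p^{n}$-extension with first ramification break $m$ has different $\ge (m+1)(p^{n}-1)$, so dividing by $d_{i+1}=p^{n}d_i$ still leaves a term $\gtrsim (m+1)/d_i \gtrsim x_i$, and the geometric blow-up persists independently of $n$. The underlying obstacle is that a nonconstant function on a curve of genus $g$ has pole divisor of degree at least the gonality, which grows with $g$, while wild ramification converts each unit of pole order into a unit of different.

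The paper sidesteps this with two ideas absent from your proposal. First, it uses \emph{tame} Kummer extensions $L=K(h^{1/\ell})$ with $\ell$ a prime different from $p$ and not dividing the order of the degree-zero class group; then the different exponent at each ramified place is $\ell-1$, the theoretical minimum, completely decoupled from pole orders. Second, it does not insist the ramification be concentrated at one small-degree place with large pole order; instead it takes $(h)=Q'-Q-P-\ell D$ with $Q,Q'$ of degree $O(\sqrt{g_K})$ (such places exist by the Weil bounds once $g_K$ is large), so the total different is $O(\ell\sqrt{g_K})$ and the multiplicative correction to $(g-1)/d$ is $1+O(g_K^{-1/2})$. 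Finally, to make $g_K$ large before each ramified step without paying anything, it first passes through an \emph{unramified} extension built from an \'etale isogeny of the Jacobian (Lemma~\ref{L:everywhere unramified}), which multiplies $g-1$ and $d$ by the same factor. These three ingredients --- tameness, ramification spread over medium-degree places via class-group divisibility, and free unramified genus inflation --- are what your approach is missing, and they are not cosmetic: without them the ratio $(g_i-1)/d_i$ is forced to diverge.
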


\section{Proof of Theorem~\ref{T:ramified tower1}}\label{S:answer1}

\begin{lemma}\label{L:all split completely}
Let $K$ be a function field over $\F_q$.
Then there is a nontrivial extension $K'/K$
in which all rational places of $K$ split completely.
\end{lemma}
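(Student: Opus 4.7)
The plan is to construct a suitable Artin--Schreier extension. Let $p = \Char \F_q$ and let $P_1, \dots, P_r$ be the rational places of $K$. (If $r = 0$ the lemma is vacuous---any nontrivial extension works---so assume $r \geq 1$.) Since $K$ has infinitely many places, I can fix a further place $Q$ distinct from the $P_i$. I will look for a function $f \in K$ that vanishes at each $P_i$ and has a pole of order $n$ at $Q$ for some $n$ with $\gcd(n, p) = 1$; the desired extension will then be $K' := K(y)$, where $y^p - y = f$.

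Granting such an $f$, two things must be checked: that $K'/K$ is a nontrivial geometric extension of degree $p$, and that every $P_i$ splits completely in it. For the second, since $f$ is regular at $P_i$ with $f(P_i) = 0$, the local behavior of the extension at $P_i$ is governed by the factorization of $y^p - y$ over the residue field $\F_q$, which is $\prod_{a \in \F_p}(y - a)$; this is a product of $p$ distinct linear factors, so $P_i$ is unramified and decomposes into $p$ rational places of $K'$. For the first, the condition on the pole at $Q$ rules out any expression $f = g^p - g + c$ with $g \in K$ and $c \in \F_q$: such an expression has pole order at $Q$ equal to $p \cdot |{\ord_Q g}|$ when $g$ has a pole at $Q$, and no pole at $Q$ otherwise, neither of which equals $n$. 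Exactly this conclusion is what is needed to guarantee that $K'$ is neither $K$ nor a constant field extension of $K$, i.e., that $K'/K$ is a genuine geometric Artin--Schreier cover of degree $p$.

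The only remaining step is to exhibit $f$ with the two properties above, and this is a routine application of Riemann--Roch. Choose $n$ coprime to $p$ and large enough that $n \deg Q > 2 g_K - 1 + r$; then the Riemann--Roch spaces $L(n Q - P_1 - \dots - P_r) \supseteq L((n-1)Q - P_1 - \dots - P_r)$ have dimensions differing by $\deg Q \geq 1$, so any element $f$ lying in the larger space but not the smaller one vanishes at every $P_i$ and has a pole of order exactly $n$ at $Q$. I do not anticipate a real obstacle here: once Artin--Schreier is identified as the right mechanism for converting the residue condition $f(P_i) \in \{a^p - a : a \in \F_q\}$ into complete splitting, the construction reduces to standard Riemann--Roch dimension counting.
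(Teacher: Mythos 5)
Your proof takes essentially the same approach as the paper: construct $f$ vanishing at every rational place of $K$ with a wild pole elsewhere, and pass to the Artin--Schreier cover, so that the rational places split completely (the defining equation reduces to $y^p-y=0$, resp.\ $y^q-y=0$, in the residue field $\F_q$) while the pole forces total ramification there, which in turn guarantees that $K'$ is a geometric extension of the same constant field. The paper uses $y^q-y=f$ with a \emph{simple} pole, giving a degree-$q$ extension and making the ramification argument immediate, whereas you use $y^p-y=f$ with pole order $n$ coprime to $p$ and argue nontriviality via $f\not\equiv c\pmod{\wp(K)}$; both are fine, and the paper explicitly offers Riemann--Roch as an alternative to weak approximation for producing $f$. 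One small numerical correction: since every degree-$1$ place is among the $P_i$, your auxiliary place $Q$ has $\deg Q\ge 2$, and then $n\deg Q>2g_K-1+r$ ensures $\deg(nQ-\textstyle\sum P_i)>2g_K-2$ but not $\deg((n-1)Q-\textstyle\sum P_i)>2g_K-2$, so the claimed jump by exactly $\deg Q$ does not follow as stated; require instead $(n-1)\deg Q-r>2g_K-2$ (still achievable with $n$ coprime to $p$), after which the Riemann--Roch dimension count is correct.
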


\begin{proof}
Weak approximation (or Riemann-Roch) gives $f \in K^*$
having a zero at each rational place of $K$ and a simple pole at some
other place of $K$.
Adjoin a root of $y^q-y=f$ to obtain $K'$.
Then $K'/K$ is totally ramified above the simple pole of $f$,
so $K'$ is another function field over $\F_q$ and $[K':K]=q > 1$.
\end{proof}

\begin{lemma}\label{L:most split completely}
Let $K$ be a function field over $\F_q$ with $N_K>0$,
and let $P$ be a place of $K$.
For any $\eps > 0$, there is an extension $L/K$ such that
$N_L/N_K > (1-\eps) [L:K]$
and $P$ ramifies in $L/K$.
\end{lemma}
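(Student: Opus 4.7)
The plan is to build $L$ in two stages: first enlarge $K$ to a field $K'$ in which every rational place of $K$ has split into many rational places of $K'$, then adjoin a single generalized Artin-Schreier cover $L/K'$ that ramifies above $P$ while keeping almost all of those rational places split.

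First I would iterate Lemma \ref{L:all split completely} $n$ times to obtain $K=K^{(0)}\subset K^{(1)}\subset\dots\subset K^{(n)}=:K'$ in which every rational place splits completely at each step. Combined with the general inequality $N_M/[M:K]\le N_K$ valid for any extension $M/K$, this forces $N_{K'}=[K':K]\,N_K=q^nN_K$, and every rational place of $K'$ will lie above a rational place of $K$. The integer $n$ will be chosen at the end, large enough to absorb the loss introduced by the final ramification step.

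Now I would fix a place $P'$ of $K'$ above $P$ and use Riemann-Roch to choose $f\in(K')^{*}$ with pole of order exactly $a$ at $P'$ (where $a$ is a large positive integer coprime to $p$), with a zero at every rational place of $K'$ other than $P'$, and with no other poles. Setting $L=K'(y)$ with $y^q-y=f$, the condition $\gcd(a,p)=1$ ensures that $f$ is not of the form $g^q-g$ (such a $g$ would have pole order a multiple of $q$ at $P'$), so $[L:K']=q$, and it ensures that $L/K'$ is totally (wildly) ramified at $P'$, so $P$ ramifies in $L/K$. At every rational place $R\ne P'$ of $K'$ we have $f(R)=0$, so $y^q-y$ factors as $\prod_{\alpha\in\F_q}(y-\alpha)$ in the residue field, hence $R$ splits completely in $L/K'$.

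Counting rational places: if $P'$ is not rational, all $N_{K'}$ rational places split, yielding $N_L\ge qN_{K'}$; if $P'$ is rational, the other $N_{K'}-1$ rational places split and $P'$ contributes one further rational place (the unique place above it has residue degree $1$, since $L/K'$ is totally ramified there), yielding $N_L\ge q(N_{K'}-1)+1$. Either way
\[
\frac{N_L}{[L:K]}\;\ge\;N_K-\frac{q-1}{q\,[K':K]},
\]
and this exceeds $(1-\eps)N_K$ once $[K':K]=q^n>(q-1)/(\eps qN_K)$, which we arrange by taking $n$ large. The step needing the most care is the Riemann-Roch construction of $f$: once $n$ is fixed, one needs $a$ coprime to $p$ and roughly larger than $(N_{K'}+g_{K'})/\deg P'$, but this is routine bookkeeping rather than a substantive obstacle.
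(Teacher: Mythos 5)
Your proof is correct and follows essentially the same route as the paper's: iterate Lemma~\ref{L:all split completely} to make $N_{K'}$ large, then adjoin a root of $y^q-y=f$ where $f$ vanishes at all rational places away from a place over $P$ and has a pole there. The only real difference is in how $f$ is produced: the paper uses weak approximation to get a \emph{simple} pole at $P$ (so the pole order is automatically $1$, coprime to $p$, and there is no need to worry about a pole order dropping below the prescribed $a$ or being divisible by $p$), whereas your Riemann--Roch construction requires the extra bookkeeping you flag at the end (choosing $a$ large, coprime to $p$, and ensuring the pole order is \emph{exactly} $a$ via $\deg(aP'-\sum R)>2g_{K'}-2$ and $\deg((a-1)P'-\sum R)>2g_{K'}-2$). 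Weak approximation is the cleaner tool here precisely because it prescribes the local expansion at $P$ exactly, at the cost of allowing uncontrolled extra poles elsewhere, which is harmless. Your final inequality $N_L/[L:K]\ge N_K-(q-1)/(q[K':K])$ is a slightly sharper count (you keep the $+1$ from the totally ramified rational place) than the paper's $N_L\ge(N_K-1)q$, but this makes no essential difference; the paper instead reorganizes the reduction so that after renaming it only needs $1/N_K<\eps$, giving a one-line conclusion.
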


\begin{proof}
We first reduce to the case where $1/N_K < \eps$.
Repeated application of Lemma~\ref{L:all split completely}
yields $K'/K$ such that $1/([K':K] N_K)< \eps$ and all
rational places of $K$ split completely.
Then $N_{K'} = [K':K] N_K$.
Pick a place $P'$ of $K'$ above $P$.
If we could find $L/K'$ satisfying the conditions
of the lemma for $(K',P')$,
then
\[
	\frac{N_L}{N_K} = \frac{N_L}{N_{K'}} \frac{N_{K'}}{N_K}
		> (1-\eps)[L:K'] [K':K] = (1-\eps)[L:K],
\]
so $L/K$ would work for $(K,P)$.
Thus, renaming $K'$ as $K$, we may assume $1/N_K < \eps$.

Weak approximation gives $f \in K^*$ having a simple pole at $P$
and zeros at all rational places not equal to $P$.
Adjoin a root of $y^q-y=f$ to obtain $L$.
Then $P$ ramifies in $L/K$, but all other rational places of $K$
split completely, so $N_L \ge (N_K-1)q$.
Thus $N_L/N_K \ge q (1-1/N_K) > [L:K](1-\eps)$.
\end{proof}

\begin{proof}[Proof of Theorem~\ref{T:ramified tower1}]
Fix a sequence of positive numbers $\eps_m \to 0$
such that $\prod_{m=1}^\infty (1-\eps_m)$ converges to a positive number.
In our proof we will apply Lemma~\ref{L:most split completely}
infinitely often, using $\eps_1$ in the first application,
$\eps_2$ in the second application, and so on.

Let $P_0, P_1, \dots$ be an enumeration of the places of $K_0$
(of all degrees).
Given $K_i$, we construct $K_{i+1}$ in stages so that all places of $K_i$
lying above $P_0,\dots,P_i$ ramify in $K_{i+1}/K_i$.
Namely, if $Q_1,\dots,Q_I$ are all the places of $K_i$
lying above $P_0,\dots,P_i$,
we set $K_{i,0}=K_i$
and then for $j=1,\dots,I$ in turn,
apply Lemma~\ref{L:most split completely} with the first
unused $\eps_m$ to find $K_{i,j}/K_{i,j-1}$
in which some place of $K_{i,j-1}$ above $Q_j$ ramifies
and $N_{K_{i,j}}/N_{K_{i,j-1}}>(1-\eps_m)[K_{i,j}:K_{i,j-1}]$.
Finally, set $K_{i+1}=K_{i,I}$.

If $R$ is a place of some $K_r$,
then $R$ lies over some $P_j$ of $K_0$.
By construction, for all $i \ge \max\{j,r\}$,
all places of $K_i$ above $R$ ramify in $K_{i+1}/K_i$.
Thus $R$ is ramified in $K_{i+1}/K_r$.

The inequality in Lemma~\ref{L:most split completely}
guarantees that the value of $N/d$ for $K_{i,j}$
is at least $1-\eps_m$ times the value of $N/d$ for $K_{i,j-1}$.
Thus $N_i/d_i$ is at least $\left(\prod_{m \le M} (1-\eps_m) \right) N_0/d_0$,
if $M$ is the number of applications of Lemma~\ref{L:most split completely}
used in the construction up to $K_i$.
Since $N_0/d_0>0$ and $\prod_{m=1}^\infty (1-\eps_m)$ converges,
the decreasing sequence $N_i/d_i$ is bounded below by
\[
	\left(\prod_{m=1}^{\infty} (1-\eps_m) \right) N_0/d_0,
\]
which is positive.
So $N_i/d_i$ has a positive limit.
Finally, $N_i \to \infty$ implies $g_i \to \infty$.
\end{proof}

\begin{remark}
A slight modification of the argument shows that, given $K_0$,
we can construct an everywhere ramified tower in which
$N_i/d_i$ converges to any prescribed value less than $N_0$.
This is because weak approximation
lets us prescribe the ramification and splitting
of any finite number of places at each step.
\end{remark}

\section{Proof of Theorem~\ref{T:ramified tower2}}\label{S:answer2}

Let $p$ be the characteristic of $\F_q$.

\begin{lemma}\label{L:everywhere unramified}
Let $K$ be a function field over $\F_q$ of genus $>1$,
and let $P$ be a place of $K$.
Then there exist unramified extensions $K'/K$
of arbitrarily high genus
such that for some place $Q$ of $K'$ lying over $P$,
the residue field extension for $Q/P$ is trivial.
\end{lemma}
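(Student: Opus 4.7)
The plan is to realise $K'$ as the function field of an \'etale cover of the smooth projective curve $X$ associated to $K$, obtained by pulling back the multiplication-by-$n$ isogeny on the Jacobian $J := \Pic^0_X$. By F.\ K.\ Schmidt's theorem $X$ has a divisor class $L$ of degree $1$ defined over $\F_q$, so one gets a well-defined Abel--Jacobi morphism $\iota\colon X \to J$, $x \mapsto [x] - L$, over $\F_q$. For each integer $n$ coprime to $p$, set
\[
X_n := X \times_{J,[n]} J,
\]
and take $K'$ to be the function field of $X_n$. Since $[n]\colon J \to J$ is finite \'etale of degree $n^{2g}$, so is $X_n \to X$.

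Three items then need checking. First, $X_n$ should be geometrically integral so that $K'$ is a function field over $\F_q$ giving an honest unramified extension of $K$; this amounts to showing that the composite $\pi_1^{\et}(X_{\Fbar_q}) \to \pi_1^{\et}(J_{\Fbar_q}) \to J[n]$ is surjective, which is a consequence of the standard fact that Abel--Jacobi induces an isomorphism on first $\ell$-adic homology for every $\ell \ne p$. Second, Riemann--Hurwitz applied to the \'etale cover $X_n \to X$ yields $g_{X_n} - 1 = n^{2g}(g_K - 1)$, which tends to $\infty$ with $n$ because $g_K > 1$. Third, letting $d = \deg P$ and $x \in X(\F_{q^d})$ be a geometric representative of $P$, a place $Q$ of $X_n$ above $P$ with trivial residue extension corresponds to an $\F_{q^d}$-rational lift $y \in J(\F_{q^d})$ of $\iota(x)$ under $[n]$; such a $y$ exists whenever $\gcd(n, |J(\F_{q^d})|) = 1$, since then $[n]$ is bijective on the finite group $J(\F_{q^d})$.

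Choosing $n$ coprime to $p \cdot |J(\F_{q^d})|$ and tending to $\infty$ then produces the required unramified extensions. I expect the main obstacle to be the geometric-integrality step, which is the only place where one must input anything beyond the finiteness of $J(\F_{q^d})$ and Riemann--Hurwitz; the other two checks are essentially bookkeeping.
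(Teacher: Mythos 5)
Your proposal is correct and follows essentially the same route as the paper: embed the curve in its Jacobian via Abel--Jacobi using an $\F_q$-rational degree-$1$ divisor class, pull back the multiplication-by-$n$ isogeny to get an \'etale cover, and use Riemann--Hurwitz for the genus. The only cosmetic differences are that the paper takes $n \equiv 1 \pmod{p\cdot\#J(\F_{q^f})}$ so that the chosen representative of $P$ literally fixes itself under $[n]$ (whereas you take $n$ coprime to $p\cdot\#J(\F_{q^d})$ and invoke bijectivity of $[n]$ on $J(\F_{q^d})$ to produce a lift), and that you flag the geometric-integrality step explicitly (the paper leaves it implicit in the assertion that $C'=[n]^{-1}C$ yields a function field $K'$ over $\F_q$).
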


\begin{proof}
Let $C$ be the smooth, projective, geometrically integral curve
with function field $K$.
Let $J$ be the Jacobian of $C$.
There exists a degree-1 divisor $D$ on $C$ \cite[V.1.11]{Stb}.
Use $D$ to identify $C$ with a closed subvariety of $J$.

The place $P$ corresponds to a Galois conjugacy class of
points in $C(\F_{q^f})$,
where $\F_{q^f}$ is the residue field.
Choose $P_0$ in this conjugacy class.
Choose $n \in \Z_{>0}$ such that $n \equiv 1 \pmod{p \cdot \#J(\F_{q^f})}$.
Then the multiplication-by-$n$ map $[n]\colon J \to J$
is \'etale, and maps $P_0$ to itself.
Let $C' = [n]^{-1} C$, so $C'$ is an \'etale cover of $C$.
Then $C'$ corresponds to a function field $K'$
that is unramified over $K$.
Also $P_0 \in C'(\F_{q^f})$
represents a place $Q$ of $K'$ lying over $P$,
having the same residue field as $P$.
By choosing $n$ large,
we can make $g_{K'}$ as large as desired,
by the Hurwitz formula.
\end{proof}

\begin{lemma}\label{L:slightly ramified}
Let $K$ be a function field over $\F_q$ of genus $>1$,
let $P$ be a place of $K$,
and let $\eps>0$.
Then there exists an extension $L/K$ with
$(g_L-1)/(g_K-1) < (1+\eps)[L:K]$
such that $P$ ramifies in $L/K$.
\end{lemma}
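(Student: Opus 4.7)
The plan is to realize $L$ as a tower $K \subset K' \subset L$, where $K'/K$ is a large unramified extension obtained from Lemma~\ref{L:everywhere unramified} and $L/K'$ is a small-degree tame extension ramified at a place over $P$.

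First, I apply Lemma~\ref{L:everywhere unramified} to produce an unramified $K'/K$ together with a place $Q$ of $K'$ over $P$ satisfying $\deg Q = \deg P$, with $n := [K':K]$ (equivalently, $g_{K'}$) as large as desired. Write $C'$ for the smooth projective curve with function field $K'$, and $J'$ for its Jacobian. Since $K'/K$ is unramified, Hurwitz gives $g_{K'}-1 = n(g_K-1)$.

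Second, I construct a separable extension $L/K'$ of some prime degree $\ell \ne p$ that is tamely ramified at $Q$ and whose different $\mathfrak{D}_{L/K'}$ has bounded degree. I take $L = K'(\sqrt[\ell]{f})$ where $f \in K'^*$ is chosen so that $\divv f = Q + a R + \ell E$, with $R$ an auxiliary closed point of $C'$ of small degree coprime to $\ell$, $a$ an integer with $\gcd(a,\ell) = 1$ arranged so that $\deg(Q + a R) \equiv 0 \pmod{\ell}$, and $E$ an effective divisor. Existence of such $f$ reduces to $[Q + aR] \in \ell\,\Pic(C')$; by taking $\ell$ coprime to $p \cdot |J'(\F_q)|$, multiplication by $\ell$ on $J'(\F_q)$ is an isomorphism, making the $\ell$-divisibility automatic once the degree condition holds. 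The only places of $K'$ ramified in $L/K'$ are then $Q$ and $R$, each totally tamely ramified of index $\ell$, so
\[
\deg \mathfrak{D}_{L/K'} \;=\; (\ell - 1)(\deg Q + \deg R) \;\le\; c,
\]
for a constant $c$ that may depend on $\ell$ and $\deg R$ but not on $n$.

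Third, Hurwitz applied to $L/K'$ gives
\[
g_L - 1 \;=\; \ell(g_{K'} - 1) + \tfrac{1}{2}\deg \mathfrak{D}_{L/K'} \;\le\; \ell n(g_K - 1) + \tfrac{c}{2},
\]
whence
\[
\frac{g_L - 1}{g_K - 1} \;\le\; \ell n + \frac{c}{2(g_K - 1)} \;<\; (1+\eps)\,\ell n \;=\; (1+\eps)[L:K]
\]
provided $n > c/(2\eps\ell(g_K - 1))$. This can be arranged by taking $n$ large: even if $\ell$ must grow with $n$, the factor $(\ell-1)/\ell$ stays bounded by $1$, and the Hasse-Weil bound forces $\deg R$ to grow at most logarithmically in $n$. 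Finally, $P$ ramifies in $L/K$ because $P$ is unramified in $K'/K$ while $Q \mid P$ ramifies in $L/K'$.

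The main obstacle is step two: choosing the prime $\ell$ coprime to $p \cdot |J'(\F_q)|$ together with the auxiliary place $R$ of controlled degree, and checking the divisor-class condition. Existence of $R$ of small degree follows from the Hasse-Weil bound, which guarantees closed points of every sufficiently large degree on $C'$. When $\mu_\ell \not\subset \F_q$ the cover $L/K'$ is separable but not Galois, yet its tame ramification pattern is the same, so the Hurwitz computation is unchanged; the case $q = 2$ requires extra care but the overall structure of the argument is preserved.
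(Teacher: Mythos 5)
Your proof follows essentially the same two-step strategy as the paper: first pass to a large unramified extension via Lemma~\ref{L:everywhere unramified} to make the genus large while fixing the residue degree of the distinguished place, and then take a degree-$\ell$ Kummer-type extension (with $\ell$ a prime not dividing $p$ times the order of the degree-zero class group) ramified at that place with a controlled different. The paper realizes the principal divisor as $(h)=Q'-Q-P-\ell D$, where $Q,Q'$ are auxiliary places of degrees $d$ and $d+f$ with $d\approx\sqrt{g_K}$, so that $Q'-Q-P$ has degree exactly zero; you instead use one auxiliary place $R$ of small degree coprime to $\ell$ together with a free integer coefficient $a$ so that $\deg(Q+aR)$ is only required to vanish modulo $\ell$. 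This is a slightly more flexible bookkeeping choice, and it yields a smaller different, but the essential ideas (unramified base change, $\ell$-divisibility of divisor classes, tame Kummer ramification, Hurwitz) coincide.

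One small slip: you assert that $E$ is effective in $\divv f = Q+aR+\ell E$, but this cannot hold in general, since $\divv f$ has degree $0$ while $Q+aR+\ell E$ would be effective of positive degree whenever $a\geq 0$. What you actually get from the $\ell$-divisibility of $[Q+aR]$ in $\Pic(C')$ is an arbitrary divisor $E$ (namely $-D'$ where $[Q+aR]=\ell[D']$), and effectiveness is neither available nor needed: the only relevant feature of $E$ is that its multiplicities are absorbed into multiples of $\ell$ and hence contribute no ramification. With that correction the argument is sound and matches the paper's.
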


\begin{proof}
Let $f$ be the degree of $P$ over $\F_q$.
For an unramified extension $K'/K$,
we have $(g_{K'}-1)/(g_K-1) = [K':K]$ by Hurwitz.
By applying Lemma~\ref{L:everywhere unramified},
we may replace $(K,P)$ by some $(K',Q)$
in order to assume that $g_K$ is arbitrarily large,
without changing $f$.

When $g_K$ is sufficiently large,
an easy estimate (e.g.\ cf.\ \cite[V.2.10]{Stb})
based on the Weil bounds
implies there exist places $Q,Q'$ of $K$
of degrees $d,d+f$ respectively, where $d$ is
the smallest integer $>\sqrt{g_K}$ and not equal to $f$.
Choose a prime $\ell \nmid p \cdot \#G$, where $G$ is the group of degree-zero
divisor classes of $K$.  Then every element of $G$, and in particular $[Q'-Q-P]$,
is divisible by $\ell$.
Thus, there exists a divisor $D$ of degree~$0$ and an element $h$ of $K$
such that $(h) = Q'-Q-P - \ell D$.
Let $L=K(h^{1/\ell})$, so $[L:K]=\ell$.
Hurwitz gives
\[
	2g_L-2 = \ell(2g_K-2) + (\ell-1)((d+f)+d+f),
\]
so
\[
	\frac{g_L-1}{[L:K](g_K-1)}
	\quad=\quad 1 + \frac{\ell-1}{\ell} \left( \frac{d+f}{g_K-1} \right)
	\quad=\quad 1 + O(g_K^{-1/2}).
\]
The $O(g_K^{-1/2})$ term will be $<\eps$ if $g_K$ is sufficiently large.
\end{proof}

\begin{proof}[Proof of Theorem~\ref{T:ramified tower2}]
Given $K_0$, let $K_1/K_0$ be an extension with $g_1>1$.
Just as Lemma~\ref{L:most split completely}
let us prove Theorem~\ref{T:ramified tower1},
Lemma~\ref{L:slightly ramified}
now lets us construct an everywhere ramified tower
$K_1 \subset K_2 \subset \dots$
such that at the $i^{{\operatorname{th}}}$ step
the value of $(g_i-1)/d_i$ increases by a factor at most $1+\eps_i$
for a prescribed $\eps_i>0$.
By choosing $\eps_i$ so that $\prod (1+\eps_i)$ converges,
we obtain such a tower with
$\lim (g_i-1)/d_i < \infty$.
Since $d_i \to \infty$, this limit equals $\lim g_i/d_i$.
\end{proof}

\section{Question}\label{S:question}

Can one combine Theorems \ref{T:ramified tower1} and~\ref{T:ramified tower2}?
In particular, does there exist an everywhere ramified tower
in which both $\lim N_i/d_i > 0$ and $\lim g_i/d_i <\infty$?

\end{document}